\newtheorem{remark}{Remark}
\newtheorem{proposition}{Proposition}
 \title{Topics in Hyperplane Arrangements, Polytopes, and Box Splines--Errata}
\author{C. De Concini,\quad
C. Procesi}
\begin{document}\begin{abstract}
We have received an e-mail from Bryan Gillespie pointing out that a proposition, that is Proposition
8.5  of our book  \cite{depr}, is incorrect as stated. The given formula (8.5) is valid only in the generic case
that is assuming that for any point of the arrangement $p$, $X_p$ is formed by a basis. The correct
proposition is slightly weaker, in general one must replace Formula 8.5 of the book with the next
Formula (3). Accordingly one has to change Proposition 9.2 in the obvious way.
The remaining parts of the book are not affected but one should remove the first line of 11.3.3 which
quotes the incorrect formula. Here we discuss the correct proposition, replacing Proposition 8.5.
 \end{abstract}
\maketitle

Let us first develop a simple identity.  Take vectors  $b_i,\ i=0,\dots,k$. Assume that  $b_0=\sum_{i=1}^k\alpha_ib_i.$ Choose numbers $\nu_i,\ i=0,\dots,k,$ and set
\begin{equation}\label{nu} \nu:=\nu_{ 0}-\sum_{i=1}^k\alpha_i\nu_{i}.\end{equation}  If
$\nu \neq 0,$ we write
\begin{equation}\label{separat}\frac{1}{ \prod_{i=0}^k(b_i+\nu_{i})}=\nu^{-1}  \frac{b_0+\nu_{0}- \sum_{i=1}^k\alpha_i(b_i+\nu_{i})}{ \prod_{i=0}^k(b_i+\nu_{i})}.\end{equation}
When we  develop the right-hand side, we obtain a  sum of $k+1$
terms in each of which one of the elements $b_i+\nu_{i}$ has
disappeared. Let us remark that if $\alpha_i\neq 0$ the span of $b_0\ldots ,\check b_i,\ldots b_k$ equals the span of $b_1,\ldots ,b_k$.

\smallskip

Let us recall some notation, we let $X = \{a_1,\dots,a_m\}$ be a list of vectors spanning a real (or complex)
vector space $V$ and $\underline\mu:= \{\mu_1,\dots,\mu_m\} $ a list of real (resp. complex) parameters. These data define
a hyperplane arrangement in  $V^*$ given by the linear equations $a_i +\mu_i = 0$, the various intersections
of these hyperplanes form the subspaces of the arrangement. In particular we have {\em the points
of the arrangement} for which we use the notation $P(X,\underline \mu)$   of Section 2.1.1. Given $p\in P(X,\underline \mu)$  we
set $X_p$ for the sublist of $a \in X$ such that  $a+\mu_a$  vanishes at $p$. Denote by $\mathcal L_{X_p}$ the family of subsets
of $X_p$ spanning $V$. Notice that if  $\ell\in\mathcal L_{X_p}$ the linear polynomials $a+\mu_a$ with $a\in \ell$ have $p$ as unique common zero.\begin{proposition}[Replaces 8.5 of \cite{depr}] \label{separano} Assume that $X$ spans $V$.
Then:
\begin{equation}\label{separa}\prod_{a\in X}\frac{1}{   a  +\mu_a}=\sum_{p\in  P(X,\underline \mu)} \sum_{\ell\in \mathcal L(X_p)} c_\ell\prod_{a\in \ell}  \frac{1}{   a  +\mu_a}=\sum_{p\in  P(X,\underline \mu)} \sum_{\ell\in \mathcal L(X_p)}c_\ell\prod_{a\in \ell}  \frac{1}{   a  -\langle a\,|\, p\rangle} \end{equation}
with $c_\ell\in \mathbb C$.

For any $p\in P(X,\underline \mu)$,
$$c_{X_p}=\prod_{a\in X\setminus X_p}\frac{1}{   \langle a\,|\, p\rangle  +\mu_a}.$$
\end{proposition}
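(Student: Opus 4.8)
The plan is to prove the two assertions in turn. First I would establish the \emph{existence} of an expansion of the form (\ref{separa}) with some constants $c_\ell\in\mathbb C$, by repeatedly applying the elementary identity (\ref{separat}). I proceed by induction on the cardinality $|X|$. If every circuit (minimal linearly dependent subset) contained in $X$ gives $\nu=0$ in the sense of (\ref{nu}), then the affine equations $a+\mu_a=0$, $a\in X$, are simultaneously consistent, hence share a common zero $p$; since $X$ spans, $p$ is unique, $X=X_p$, and the left-hand side is already a single term of the required shape (with $\ell=X_p$). Otherwise choose a circuit $C=\{a_0,\dots,a_k\}\subseteq X$ whose associated $\nu$ is nonzero, write the dependency $a_0=\sum_{i=1}^k\alpha_i a_i$ (all $\alpha_i\neq0$ since $C$ is a circuit), and apply (\ref{separat}) with $b_i=a_i$, keeping the remaining factors $a\in X\setminus C$ as spectators. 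This rewrites the product as a combination of $k+1$ products, each over a sublist $X\setminus\{a_i\}$ of strictly smaller cardinality; by the remark following (\ref{separat}) each such sublist still spans $V$, so the inductive hypothesis applies to each, and assembling the results yields an expansion of the form (\ref{separa}).

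The key point is to check that the terminal products are indexed exactly by pairs $(p,\ell)$ with $p\in P(X,\underline\mu)$ and $\ell\in\mathcal L(X_p)$. A product over a spanning sublist $\ell\subseteq X$ cannot be reduced further precisely when every circuit of $\ell$ has $\nu=0$, which by the consistency criterion above means that the hyperplanes $a+\mu_a=0$, $a\in\ell$, are concurrent at a point $p$, unique because $\ell$ spans. Each such $a$ then lies in $X_p$, so $\ell\subseteq X_p$ and $\ell\in\mathcal L(X_p)$; moreover $X_p\supseteq\ell$ spans, so $p\in P(X,\underline\mu)$. Distinct reduction branches may produce the same pair $(p,\ell)$, but their contributions are simply collected into the single scalar $c_\ell$. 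The second equality in (\ref{separa}) is then immediate: for $a\in X_p$ one has $\langle a\,|\,p\rangle+\mu_a=0$, i.e. $\mu_a=-\langle a\,|\,p\rangle$, so $a+\mu_a=a-\langle a\,|\,p\rangle$ on every $\ell\in\mathcal L(X_p)$.

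To compute $c_{X_p}$ I would multiply both sides of (\ref{separa}) by $\prod_{a\in X_p}(a+\mu_a)$ and evaluate at $x=p$. On the left this gives $\prod_{a\in X\setminus X_p}(a+\mu_a)^{-1}$, regular at $p$, with value $\prod_{a\in X\setminus X_p}(\langle a\,|\,p\rangle+\mu_a)^{-1}$. On the right, the term indexed by $(p',\ell)$ becomes $c_\ell\,\frac{\prod_{a\in X_p\setminus\ell}(a+\mu_a)}{\prod_{a\in\ell\setminus X_p}(a+\mu_a)}$; the denominator is nonzero at $p$, its factors corresponding to $a\notin X_p$, while the numerator vanishes at $p$ as soon as $X_p\setminus\ell\neq\emptyset$. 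Hence only terms with $X_p\subseteq\ell$ survive, and for such a term $\ell\in\mathcal L(X_{p'})$ spanning forces $p'=p$ and $\ell=X_p$. The surviving contribution is exactly $c_{X_p}$, so comparing the two sides gives
\[
c_{X_p}=\prod_{a\in X\setminus X_p}\frac{1}{\langle a\,|\,p\rangle+\mu_a},
\]
as claimed. Note that this isolates $c_{X_p}$ even though the full expansion need not be unique, since all other terms genuinely vanish at $p$.

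The main obstacle I anticipate is not the coefficient computation, which is a clean limit, but the bookkeeping in the first step: one must verify that the splitting process terminates precisely at the spanning concurrent sublists and produces no spurious non-spanning supports. The conceptual heart is the equivalence between ``every circuit of $\ell$ has $\nu=0$'' and ``the system $\{a+\mu_a=0\}_{a\in\ell}$ is consistent'', i.e. that a nonzero $\nu$ detects exactly an inconsistent circuit; this is what guarantees that the hypothesis $\nu\neq0$ of (\ref{separat}) is available whenever a reduction is still needed, and that it is unavailable exactly when the term is already of the final form.
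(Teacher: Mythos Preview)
Your proof is correct and follows the same overall strategy as the paper: reduce via the separation identity~(\ref{separat}) in an induction on $|X|$, then isolate $c_{X_p}$ by multiplying through by $\prod_{a\in X_p}(a+\mu_a)$ and evaluating at $p$ (your computation of $c_{X_p}$ is in fact identical to the paper's).

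The organization of the inductive step differs slightly. The paper writes $X=(Y,z)$ with $Y$ still spanning, applies the inductive hypothesis to $Y$, and then for each resulting term $\tfrac{1}{z+\mu_z}\prod_{a\in\ell}\tfrac{1}{a+\mu_a}$ either absorbs $z$ into $\ell$ (when $\langle z\,|\,p\rangle+\mu_z=0$) or applies~(\ref{separat}) to $\{z\}\cup\ell$; in the latter case the relevant $\nu$ is computed explicitly as $\langle z\,|\,p\rangle+\mu_z\neq0$, so no side lemma is needed. You instead locate a circuit $C\subseteq X$ with $\nu\neq0$ and apply~(\ref{separat}) to $C$ with the remaining factors as spectators, reducing directly to the sublists $X\setminus\{a_i\}$. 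This is a clean variant; the only extra ingredient it needs is the (standard) fact that circuit dependencies span all linear dependencies among the $a\in X$, so that ``every circuit has $\nu=0$'' genuinely forces consistency of the affine system $\{a+\mu_a=0\}_{a\in X}$. Once that is granted, your termination/base analysis in the second paragraph is correct, though strictly speaking it is already subsumed by the inductive hypothesis applied to each $X\setminus\{a_i\}$.
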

\begin{proof}  This follows by induction applying the previous algorithm of separation of denominators.

Precisely, if $X$ is a basis, there is a unique point of the
arrangement and there is nothing to prove. Otherwise,  we can write $X=(Y,z)$
where $Y$ still spans $V$. By induction
$$\prod_{a\in X}\frac{1}{   a  +\mu_a}=\frac{1}{ z+\mu_z}\prod_{a\in Y}\frac{1}{   a  +\mu_a}=\sum_{p\in  P(Y,\underline \mu)} \sum_{\ell\in \mathcal L(Y_p)}c_\ell\frac{1}{ z+\mu_z}\prod_{a\in \ell}\frac{1}{   a +\mu_a}. $$
We need to analyze each product  \begin{equation}\label{illpp}
\frac{1}{ z+\mu_z}\prod_{a\in
\ell}\frac{1}{   a  +\mu_a}.
\end{equation}  If $\langle z\,|\,p\rangle+\mu_z=0,$ then
$p\in P(X,\underline \mu), \ \ell\cup \{z\}\in \mathcal L(X_p)$ and we are done.
Otherwise,since $\ell$ spans $V$, write $z=\sum_{a\in \ell}d_aa$ and apply the
previous  algorithm to the   list $\{z\}\cup \ell$ and the corresponding  numbers $\mu_z,\ \mu_a$. As we have remarked
the product \eqref{illpp} develops as a linear combination of
products  of the form
$$\prod_{a\in \ell'}\frac{1}{   a  +\mu_a}$$
$\ell'$ a proper subsequence of $\{z\}\cup\ell$ and hence  of $X$ whose elements span $V$. So we can proceed by induction.

It remains to compute $c_{X_p}$.  For a given $p\in P(X,\underline
\mu)$,
$$\prod_{a\in X\setminus X_p}\frac{1}{   a  +\mu_a}=c_{X_p}+\sum_{q\in  P(X,\underline \mu)} \sum_{\ell\in \mathcal L(X_q), \ell\neq X_p}
c_\ell\frac{\prod_{a\in X_p}  (a  +\mu_a)}{  \prod_{a\in \ell} (a  +\mu_a)}.$$

Hence, evaluating both sides at $p$ yields
$$c_{X_p}=\prod_{a\in X\setminus X_p}\frac{1}{   \langle a\,|\, p\rangle  +\mu_a}.$$ \end{proof}
Given any list $X$ spanning $V$ it is easily seen that for generic values of the parameters each set
$X_p$  is a basis of $V$ extracted from $X$ and each basis of  $V$ extracted from $X$ gives rise to a point of
the arrangement. We say then that $X,\underline\mu$ {\em are generic.}
\begin{remark}\label{rem}
 In case the data $X,\underline\mu$  are generic the set $\mathcal L(X_p)$ reduces to the single element $ X_p$ and
Formula (3) gives back Formula 8.5.

\end{remark}One can also reformulate the formula  as

\begin{equation}\label{separa1}\prod_{a\in X}\frac{1}{   a  +\mu_a}=\sum_{p\in  P(X,\underline \mu)}  C_p\prod_{ a  +\mu_a \in X_p}  \frac{1}{   a  +\mu_a}  \end{equation}
with $C_p $  no more a number but a polynomial. 

In fact we can replace each term
$$ \prod_{a\in \ell}\frac{1}{   a +\mu_a}=\prod_{a\in X_p\setminus \ell}(   a +\mu_a) \prod_{a\in X_p}\frac{1}{   a +\mu_a}  $$ and then collect the terms so that 
\begin{equation}\label{separa2}C_p=\sum_{\ell\in \mathcal L(X_p)} c_\ell \prod_{a\in X_p\setminus \ell}(   a +\mu_a). \end{equation}

As the reader will notice, if $\ell\neq X_p$,  there is no Formula for the coefficients  $c_\ell$ this is due to the fact that these coefficients are  not uniquely determined, that is the expansion of Formula  \eqref{separa} is in general not unique, which is clear from Formula \eqref{separa2}.

 \bibliographystyle{plain}

\bibliography{bibliografia}

\end{document}